\newtheorem{thm}{Theorem}[section]
 \newtheorem{cor}[thm]{Corollary}
 \newtheorem{lem}[thm]{Lemma}
 \newtheorem{defn}[thm]{Definition}
\newtheorem{rem}[thm]{Remark}
 \newtheorem{ex}[thm]{Example}
\title{Examples of (non-)Braided tensor categories}
\author{Mejia Casta\~no Adriana}
\address{
{\bf Mejia Casta\~no Adriana} \\
Departamento de Matem\'aticas y Estad\'istica \\
Facultad de Ciencias B\'asicas \\
Universidad del Norte\\
Barranquilla, Colombia \\
e-mail: sighana25@gmail.com, mejiala@uninorte.edu.co\\
\mbox{} }
\begin{document}

\begin{abstract}
 Are introduced six examples of non-braidable tensor categories which are extensions of the category $Comod(H)$, for $H$ a supergroup algebra; and two examples of braided categories where the only possible braiding is the trivial braiding.
 \\[+2mm]
 \subjclassname{ 18D10}\\
 {\bf Keywords}: supergroup algebra, braidings, tensor category
 \end{abstract}

\maketitle

\section{Introduction }

Braided categories were introduced by Joyal and Street \cite{JS}. They are related to knot invariants, topology and quantum groups, since they can express symmetries. Some examples of braided categories are:
\begin{itemize}
    \item graded modules over a commutative ring,
    \item (co)modules over a (co)quasi-triangular Hopf algebra,
    \item the Braid category, \cite[Section 2.2]{JS},
    \item the center of a tensor category.
\end{itemize} 
In the last example, we begin with a tensor category and construct a braided one. In a general scenario, a natural question is it is possible to construct braidings starting with tensor categories. In particular,  if $G$ is a finite group, can a $G$-extension of a tensor category be braided? In this work we show that this can be done in very few cases. Then, an extension of a braided category is not necessarily braided, so it is really complicated to extend that property.

However, constructing examples of non-braided categories is also  important. A big family of these come from the category of (co)modules of a Hopf algebra without a (co)quasi-triangular structure, see \cite[T 10.4.2]{M}. Masuoka in \cite{Ma1} and \cite{Ma2} constructs explicit examples of non- Quasi-triangular or non-CoQuasi-triangular Hopf algebras. In particular these Hopf algebras can not be obtained from any group algebra by twist (or cocycle) deformation. Other examples were constructed in \cite{HH}.

\medbreak

In the literature there are a few explicit examples of tensor categories, for this reason we construct in \cite{MM} eight tensor categories, following the description introduced in \cite{G} of Crossed Products. These categories extend the module category over certain quantum groups, called \emph{supergroup algebras}. In a few words, a \emph{crossed product tensor category} is, as Abelian category, the direct sum of copies of a fixed tensor category, and the tensor product comes from certain data. Then founding all possible data, we explicitly construct tensor categories.

In the same work \cite{G}, the author also describes all posible braidings over a crossed product. Following this,  three  conditions were introduced to decide if a $G$-crossed product is braidable:
\begin{enumerate}
\item the base category has to be braided,
    \item $G$ has to be Abelian, and the biGalois objects associated to each crossed product have to be trivial,
    \item the $3$-cocycle associated to each crossed product over an specific supergroup algebra has to be trivial, if $G$ is the cyclic group of order $2$.
\end{enumerate}
The goal in the present paper is to obtain all possible braidings over the categories introduced in \cite{MM}. With this, only two categories of the eight found in \cite{MM} are braided with the trivial braiding only, and the other 6 are not braidable.

\medbreak

In  \cite[Thm 6.3]{MM}, using the Frobenius-Perron dimension, we proved that these eight categories are the module category of a quasi-Hopf algebra. Although we do not know how to explicitly compute these algebras, as a Corollary of this work, we know that six of these algebras are non-Quasi-triangular and two are Quasi-triangular only. In particular, we are obtaining information about certain quasi-Hopf algebras without knowing them explicitly; showing how useful it is to work in the category world. In a near future, when we can explicitly describe  these quasi-Hopf algebras, we will already know how their Quasi-triangular structures are.

\section{Preliminaries and notation}

Throughout this paper we shall work over an algebraically closed field $\Bbbk$ of characteristic zero. For basic knowledge of Hopf algebras see \cite{M}. 
Let $H$ be a finite-dimensional Hopf algebra and $A$ be a left $H$-comodule. 
Then $A$ is also a right $H$-comodule with right coaction 
$a\mapsto a_0 \otimes  S(a_{-1})$,
see \cite[Prop 2.2.1(iii)]{AG}. 
A \emph{left $H$-Galois extension of} $A^{co(H)}$ is a left $H$-comodule algebra $(A,\rho)$ such that $A\otimes_{A^{co(H)}} A\to H\otimes A$, $a\otimes b\mapsto (1\otimes a)\rho(b)$ is bijective. Similarly, we define right $H$-Galois extension.

Consider $L$ another finite-dimensional Hopf algebra. 
An \emph{(H, L)-biGalois object} \cite{S} is an algebra $A$ that is a left $H$-Galois extension and a right $L$-Galois extension of the base field $\Bbbk$ such that the two comodule structures make it an $(H, L)$-bicomodule. Two biGalois objects are \emph{isomorphic} if there exists a bijective bicomodule morphism that is also an algebra map.
For $A$ an $(H,L)$-biGalois object,  define the tensor functor
 \begin{equation*}\label{monoidal-eq-hopf} \mathcal F _A: \operatorname{Comod}(L)\to  \operatorname{Comod}(H), \quad \mathcal F _A= A\Box_L -.
 \end{equation*}

By \cite{S}, every tensor functor between comodule categories is one of these, and $\mathcal F _A\simeq \mathcal F _B$ as tensor functors if and only if $A\simeq B$ as biGalois objects.

If $A=H$, then every natural monoidal equivalence $\beta:\mathcal F_H\to\mathcal F_H$ is given by $$f\otimes  \operatorname{id}_X:H\Box_H X\to H\Box_H X, \quad (X,\rho_X)\in \operatorname{Comod}(H), $$
where $f:H\to H$ is a bicomodule algebra isomorphism.

\begin{lem}\label{Sch}
Every natural monoidal equivalence $\operatorname{id}_{\operatorname{Comod}(H)}\to\operatorname{id}_{\operatorname{Comod}(H)}$ is given by $(\varepsilon f\otimes  \operatorname{id}_X)\rho_X.$
\end{lem}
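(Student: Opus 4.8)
The plan is to transport the known description of the monoidal automorphisms of $\mathcal F_H$ to the identity functor through the canonical tensor isomorphism $\mathcal F_H\simeq\operatorname{id}_{\operatorname{Comod}(H)}$. Recall that $\mathcal F_H=H\Box_H-$, and that for a left $H$-comodule $(X,\rho_X)$ there is a natural map $\Phi_X\colon H\Box_H X\to X$ given by $\Phi_X=\varepsilon\otimes\operatorname{id}_X$, whose inverse is the coaction $\rho_X\colon X\to H\Box_H X$. Indeed, $\rho_X(x)$ lands in $H\Box_H X$ precisely by coassociativity of the coaction, and $(\varepsilon\otimes\operatorname{id}_X)\rho_X=\operatorname{id}_X$ is the counit axiom. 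First I would verify that $\Phi=\{\Phi_X\}$ is a \emph{natural monoidal} isomorphism $\mathcal F_H\to\operatorname{id}$, i.e.\ that it intertwines the tensor constraint of the cotensor functor with the trivial constraint on $\operatorname{id}$; this is a routine check using that $\varepsilon$ is an algebra map.

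Next I would use $\Phi$ to set up a bijection between the natural monoidal equivalences of $\operatorname{id}_{\operatorname{Comod}(H)}$ and those of $\mathcal F_H$ by conjugation: to a natural monoidal equivalence $\gamma\colon\operatorname{id}\to\operatorname{id}$ associate $\beta_X=\Phi_X^{-1}\,\gamma_X\,\Phi_X$, and conversely set $\gamma_X=\Phi_X\,\beta_X\,\Phi_X^{-1}$. Since $\Phi$ is natural and monoidal, conjugation preserves naturality and the monoidal property, so $\gamma$ is a natural monoidal equivalence if and only if $\beta$ is; this gives the desired bijection. Now I would invoke the preceding result: every natural monoidal equivalence $\beta\colon\mathcal F_H\to\mathcal F_H$ has the form $\beta_X=f\otimes\operatorname{id}_X$ for a bicomodule algebra isomorphism $f\colon H\to H$, where the fact that $f$ is a right $H$-comodule map guarantees that $f\otimes\operatorname{id}_X$ restricts to $H\Box_H X$.

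Substituting this into $\gamma_X=\Phi_X\,\beta_X\,\Phi_X^{-1}$ and using $\Phi_X=\varepsilon\otimes\operatorname{id}_X$ together with $\Phi_X^{-1}=\rho_X$ yields
\[
\gamma_X=(\varepsilon\otimes\operatorname{id}_X)(f\otimes\operatorname{id}_X)\rho_X=(\varepsilon f\otimes\operatorname{id}_X)\rho_X,
\]
which is the asserted formula. The only genuinely delicate point is the compatibility of $\Phi$ with the monoidal structures, and correspondingly the claim that conjugation by $\Phi$ carries the monoidal-naturality constraints of $\mathcal F_H$ exactly onto those of $\operatorname{id}$; everything else reduces to the counit and coassociativity axioms and the quoted classification of the automorphisms of $\mathcal F_H$. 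I would therefore concentrate most of the care on making the tensor constraint of the cotensor functor explicit and on checking that $\Phi$ is monoidal, after which the computation above is immediate.
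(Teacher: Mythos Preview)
Your proposal is correct and follows essentially the same approach as the paper: both transport the classification of monoidal automorphisms of $\mathcal F_H$ to the identity functor via the canonical tensor isomorphism $X\simeq H\Box_H X$ given by the coaction with inverse $\varepsilon\otimes\operatorname{id}_X$, and then read off $(\varepsilon f\otimes\operatorname{id}_X)\rho_X$ by conjugation. The paper's proof is simply terser, asserting the monoidal equivalence $\operatorname{id}_{\operatorname{Comod}(H)}\simeq\mathcal F_H$ without the explicit verification you outline.
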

\begin{proof}
For $X\in\operatorname{Comod}(H)$, the coaction induces an isomorphism $X\simeq H\Box_H X$ with inverse induced by $\varepsilon$, the counit. Then $\operatorname{id}_{\operatorname{Comod}(H)}\simeq \mathcal F _H$ as tensor functors. Since all natural monoidal autoequivalences of $\mathcal F _H$ are given by $f\otimes\operatorname{id}_X$ then all natural monoidal autoequivalences of $\operatorname{id}_{\operatorname{Comod}(H)}$ are given by $(\varepsilon f\otimes  \operatorname{id}_X)\rho_X.$ 
\end{proof}

\begin{defn}\cite[Defn 10.1.5]{M} $(H,R)$ is a \emph{Quasi-triangular} (or QT) Hopf algebra if $H$ is a Hopf algebra and there exist $R\in H\otimes  H$, called the \emph{$R$-matrix}, invertible such that $$(\Delta\otimes\operatorname{id})R=R^{13}R^{23},\quad (\operatorname{id}\otimes\Delta)R=R^{13}R^{12},\quad \Delta^{op}(h)=R\Delta(h)R^{-1}, h\in H.$$

Dualizing we can define, $(H,r)$ is a \emph{CoQuasi-triangular} (or CQT) Hopf algebra if $H$ is a Hopf algebra and $r: H\otimes  H\to\Bbbk$, called the \emph{$r$-form}, is a linear functional which is invertible with respect to the convolution multiplication and satisfies for arbitrary $a, b, c\in H$
$$ r(c \otimes  ab) = r(c_1\otimes  b)r(c_2 \otimes  a),\quad r(ab \otimes  c) = r(a \otimes  c_1)r(b \otimes  c_2),$$ $$r(a_1 \otimes  b_1)a_2b_2 = r(a_2 \otimes  b_2)b_1a_1.$$ \end{defn}

\begin{rem} Drinfeld defined a \emph{quantum group} as a non-commutative, non-cocommutative Hopf algebra. Examples of these are the QT Hopf algebras. The importance of quantum groups lies in they allow to construct solutions for the quantum Yang-Baxter equation in statistical mechanics (the R-matrix is a solution of the Yang-Baxter equation).  
An example of quantum group are the supergroup algebras.

A \emph{supergroup algebra} is a supercocommutative Hopf superalgebra of the form $\Bbbk[G] \ltimes\wedge V$, where $G$ is a finite group and $V$ is a finite-dimensional $\Bbbk$-module of $G$. They appear and have an interesting role in the classification of triangular algebras, see \cite[Thm 4.3]{EG}. 
\end{rem}
  
\begin{ex}\label{super} Consider $H=\Bbbk C_2\ltimes\Bbbk V$, for $V$ a 2-dimensional vector space and $C_2$ the 2-cyclic group generated by $u$ with $u\cdot v=-v$ for $v\in V$. 
As an algebra, it is generated by elements $v\in V, g \in C_2$ subject to
relations
$vw + wv = 0; gv = (g \cdot v)g$ for all $v,w \in V, g \in C_2$.
The coproduct and antipode are determined by $$\Delta(v) = v\otimes  1+ u\otimes  v; \Delta(g) = g\otimes  g; S(v) = -uv; S(g) = g^{-1}, \quad v \in V, g \in C_2.$$

Taking $R=\frac{1}{2}(1\otimes  1+1\otimes  u+\otimes  1-u\otimes  u)$,  $(H,R)$ is a QT-Hopf algebra. We can construct a CoQuasi-triangular structure taking $r=R^*$ since $H$ is auto-dual. Then $(H,R^*)$ is a CQT-Hopf algebra. 
\end{ex}

\begin{defn} A \emph{finite tensor category} is a locally finite, $\mathbf{k}$-linear, rigid, monoidal Abelian category $\mathcal{D}$ with $\mathrm{End}_\mathcal{D} \left( \mathbf{1} \right) \cong \mathbf{k}$.
Given a finite group $\Gamma$, a (faithful) \emph{$\Gamma$-grading} on a  finite tensor category $\mathcal D$ is a decomposition 
$\mathcal D=\oplus_{g\in \Gamma} \mathcal D_g$, where $\mathcal D_g$ are full Abelian subcategories of $\mathcal D$ such that
\begin{itemize}
\item[$\bullet$]  $\mathcal D_g\neq 0$;
 \item[$\bullet$] $\otimes:\mathcal D_g\times \mathcal D_h\to \mathcal D_{gh}$ for all
$g, h\in \Gamma.$ 
\end{itemize}
 We have that $\mathcal C:=\mathcal D_e$ is a tensor subcategory of $\mathcal D$. The tensor category $\mathcal D$ is call a $\Gamma$-\emph{extension} of $\mathcal C$. Denote by $[V,g]$ the homogeneous elements in $\mathcal D$, for $V\in \mathcal D_g$, $g\in\Gamma$.

A \emph{braided tensor category} is a tensor category $\mathcal C$ with  natural isomorphisms $c_{X,Y}:X\otimes  Y\to Y\otimes  X$ such that
\begin{equation}\label{bra1}
    \alpha_{V,W,U}c_{U,V\otimes  W}\alpha_{U,V,W}=(\operatorname{id}\otimes  c_{U,W})\alpha_{V,U,W}(c_{U,V}\otimes\operatorname{id}),\end{equation}
\begin{equation}\label{bra2}
    \alpha_{W,U,V}^{-1}c_{U\otimes  V, W}\alpha_{U,V,W}^{-1}=(c_{U,W}\otimes\operatorname{id})\alpha_{U,W,V}^{-1}(\operatorname{id}\otimes  c_{V,W}),
\end{equation}
\end{defn} 

If $(H,r)$ is a CQT-Hopf algebra then $\operatorname{Comod}(H)$ is a braided tensor category where the braiding is given by $c_{V\otimes  W}(x\otimes  y)=r(y_{-1}\otimes  x_{-1})y_0\otimes  x_0$, for all $V,W\in\operatorname{Comod}(H)$.

\medbreak

The following Lemma gives us the first condition to know when an extension can be braided.

\begin{thm} Let $\mathcal D=\oplus_{g\in G}\mathcal D_g$ be a $\Gamma$-extension of $\mathcal C$. If $\mathcal D$ is a braided tensor category then $\mathcal C$ is a braided tensor category.
\end{thm}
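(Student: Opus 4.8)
The plan is to obtain the braiding on $\mathcal{C}$ simply by restricting the braiding of $\mathcal{D}$, exploiting that $\mathcal{C}=\mathcal{D}_e$ is a full tensor subcategory. First I would record that $\mathcal{C}$ is closed under the tensor product: since $e\cdot e=e$, the grading axiom $\otimes\colon\mathcal{D}_g\times\mathcal{D}_h\to\mathcal{D}_{gh}$ specializes to give $X\otimes Y\in\mathcal{D}_e=\mathcal{C}$ for all $X,Y\in\mathcal{C}$. Moreover $\mathcal{C}$ inherits the unit object and the associativity constraints $\alpha$ from $\mathcal{D}$, so it is a tensor subcategory (as already asserted in the definition of a $\Gamma$-grading).

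Next, for $X,Y\in\mathcal{C}$ I would consider the component $c_{X,Y}\colon X\otimes Y\to Y\otimes X$ of the braiding of $\mathcal{D}$. Both source and target lie in $\mathcal{C}$ by the previous step, and since $\mathcal{C}$ is a full subcategory of $\mathcal{D}$, this isomorphism is in fact a morphism of $\mathcal{C}$. I would therefore set $c^{\mathcal{C}}_{X,Y}:=c_{X,Y}$ for $X,Y\in\mathcal{C}$ and verify the three requirements of a braiding.

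Naturality of $c^{\mathcal{C}}$ in each variable comes for free: any morphism between objects of $\mathcal{C}$ is a morphism of $\mathcal{D}$ by fullness, so the naturality squares valid in $\mathcal{D}$ restrict verbatim to $\mathcal{C}$. For the two hexagon identities \eqref{bra1} and \eqref{bra2}, I would specialize them to objects $U,V,W\in\mathcal{C}$. Every object and every morphism appearing there—the braiding components and the associators—lives in $\mathcal{C}$, again because $\mathcal{C}$ is closed under $\otimes$ and full in $\mathcal{D}$. Hence these identities, which hold in $\mathcal{D}$, hold in $\mathcal{C}$, and $(\mathcal{C},c^{\mathcal{C}})$ is a braided tensor category.

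The argument is essentially formal, and the only point that requires care—which I would state explicitly—is the well-definedness of the restriction, i.e.\ that each $c_{X,Y}$ actually lands in $\mathcal{C}$. This is precisely where the grading hypothesis enters (through $e\cdot e=e$), together with fullness; without closure of $\mathcal{C}$ under the tensor product the components $c_{X,Y}$ would only be morphisms of the ambient category $\mathcal{D}$, and there would be nothing to restrict.
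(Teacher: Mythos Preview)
Your proposal is correct and follows essentially the same approach as the paper: both arguments restrict the braiding of $\mathcal{D}$ to the identity component $\mathcal{C}=\mathcal{D}_e$, using that $\mathcal{C}$ is a full tensor subcategory (closure under $\otimes$ coming from $e\cdot e=e$), and then observe that the hexagon axioms specialize verbatim. The paper phrases this using the bracket notation $c_{[V,e],[W,e]}=[\overline c_{V,W},e]$, but the content is identical to your restriction argument.
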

\begin{proof} Let $c$ be the braiding of $\mathcal D$, then $c_{[V,e],[W,e]}:[V\otimes  W,e]\to[W\otimes  V,e]$ and  $c_{[V,e],[W,e]}=[\overline c_{V,W},e]$ for some natural isomorphism $\overline c_{V,W}:V\otimes  W\to W\otimes  V$, for all $V,W$ objects in $\mathcal C$.  Since the associativity isomorphism satisfies $a_{[V,e],[W,e],[U,e]}=[\overline a_{V,W,U},e]$, where $\overline a$ is the associativity morphism for $\mathcal C$; then $\overline c$ is a braiding for $\mathcal C$.

\end{proof}

\bigbreak

In \cite{G}, the author describes and classifies a family of such extensions and calls it crossed product tensor category. Fix $H$ a finite-dimensional Hopf algebra. In the case when $\mathcal C=\operatorname{Comod}(H)$, in \cite{MM}, we described crossed products in terms of Hopf-algebraic datum. A continuation they are introduced. 

If $g\in G(H)$ and $L$ is a
$(H,H)$-biGalois object then the cotensor product $L\Box_H \Bbbk_g$
is one-dimensional. Let $\phi(L,g)\in \Gamma$ be the group-like element such that
$L\Box_H \Bbbk_g\simeq \Bbbk_{\phi(L,g)}$ as left $H$-comodules.
Assume that $A$ is an $H$-biGalois object with
left $H$-comodule structure $\lambda:A\to H\otimes_\Bbbk A$. If $g\in G(H)$ is a group-like
element we can define a new  $H$-biGalois object  $A^g$ on
the same underlying algebra $A$ with unchanged right comodule structure
and a new left $H$-comodule structure given by  $\lambda^g: A^g \to H\otimes_\Bbbk A^g$,
$\lambda^g(a)=g^{-1}a_{-1}g\otimes  a_0$
for all $a\in A$.

\begin{thm} \label{crossed}\cite[Lem 5.7, Thm 5.4]{MM} Let $\Upsilon=(L_a,(g(a,b),f^{a,b}),\gamma)_{a,b\in \Gamma}$ be a collection where
\begin{itemize}
	\item  $L_a$ is a $(H,H)$-biGalois object;
	\item  $g(a,b)\in G(H)$;
	\item  $f^{a,b}:(L_a\Box_H L_{b})^{g(a,b)}\to L_{ab}$ are bicomodule algebra isomorphisms;
	\item $\gamma\in Z^3(G(H),\Bbbk^\times)$ normalized,
\end{itemize}
such that for all $a,b,c\in \Gamma$:
\begin{equation}\label{cross-hopf-11}
L_e=H, \quad (g(e,a),f^{e,a})=(e,\operatorname{id}_{L_a})= (g(a,e),f^{a,e});
\end{equation}
\begin{equation}\label{cross-hopf-12}\phi(L_a,g(b,c)) g(a,bc)= g(a,b) g(ab,c);
\end{equation}
\begin{equation}\label{cross-hopf-14}
f^{ab,c}(f^{a,b}\otimes\operatorname{id}_{L_c})=f^{a,bc}(\operatorname{id}_{L_a}\otimes  f^{b,c}).
\end{equation}
Then $\operatorname{Comod}(H)(\Upsilon):=\oplus_{g\in\Gamma}\operatorname{Comod}(H)$ as a structure of tensor category.
\end{thm}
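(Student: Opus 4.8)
The plan is to endow $\mathcal D:=\oplus_{g\in\Gamma}\operatorname{Comod}(H)$, whose homogeneous objects I write as $[V,a]$ with $V\in\operatorname{Comod}(H)$ and $a\in\Gamma$, with a monoidal structure refining the $\Gamma$-grading. The natural candidate for the tensor product sends $[V,a]$ and $[W,b]$ to
\begin{equation*}
[V,a]\otimes[W,b]:=[\,V\otimes(L_a\Box_H W),\,ab\,],
\end{equation*}
where $L_a\Box_H-$ is the tensor autoequivalence $\mathcal{F}_{L_a}$ of $\operatorname{Comod}(H)$ attached to the biGalois object $L_a$; on morphisms one sets $[\xi,a]\otimes[\eta,b]=[\xi\otimes(\operatorname{id}_{L_a}\Box_H\eta),ab]$, which is visibly a bifunctor valued in degree $ab$. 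The unit is $[\mathbf 1,e]$, and relation \eqref{cross-hopf-11} (i.e. $L_e=H$ with trivial structure maps) makes the unit strict and identifies the identity component with the tensor subcategory $\operatorname{Comod}(H)$.

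First I would build the associativity constraint. Computing the two bracketings of $[V,a]\otimes[W,b]\otimes[U,c]$ produces objects in degree $abc$ that differ by four ingredients: (i) the tensor-functor structure of $\mathcal{F}_{L_a}$; (ii) associativity of $\Box_H$, identifying $L_a\Box_H(L_b\Box_H U)$ with $(L_a\Box_H L_b)\Box_H U$; (iii) the isomorphism $f^{a,b}\colon(L_a\Box_H L_b)^{g(a,b)}\to L_{ab}$, which replaces $L_a\Box_H L_b$ by $L_{ab}$ at the cost of the left-comodule twist $g(a,b)$; and (iv) a scalar supplied by $\gamma$. The twist by $g(a,b)$ must be reconciled with the reindexing that carries it across a further cotensor factor, and this is exactly what the group-like element $\phi(L_a,g(b,c))$ records; hence \eqref{cross-hopf-12} is the identity that forces the twisted left-comodule structures on the two bracketings to coincide. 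Assembling (i)--(iv) and scaling by $\gamma$ yields the associator $a_{[V,a],[W,b],[U,c]}$ as a natural isomorphism.

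Finally I would verify the coherence axioms. The triangle identity is immediate from the normalizations built into \eqref{cross-hopf-11} and the normalization of $\gamma$. The pentagon is the heart of the matter: evaluating both composites on a fourfold product and separating the ``$f$-part'' from the ``scalar part'', the rearranged maps $f^{a,b}$ cancel precisely by the associativity relation \eqref{cross-hopf-14}, while the leftover scalars cancel precisely because $\gamma\in Z^3(G(H),\Bbbk^\times)$. Rigidity and the abelian structure are inherited componentwise from $\operatorname{Comod}(H)$. I expect the pentagon to be the main obstacle: one must propagate the twists $(-)^{g(a,b)}$ through every occurrence of an $f$-map and check, using \eqref{cross-hopf-12} once more, that the $g$-twists recombine correctly, so that coherence genuinely decouples into \eqref{cross-hopf-14} for the biGalois comparisons and the $3$-cocycle condition for the scalars, rather than entangling the two.
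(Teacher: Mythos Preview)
Your overall strategy matches the paper's sketch, but there is a concrete gap in the very first step: your tensor product
\[
[V,a]\otimes[W,b]=[\,V\otimes(L_a\Box_H W),\,ab\,]
\]
omits the one-dimensional factor $\Bbbk_{g(a,b)}$ that the paper builds into its formula
\[
[V,a]\otimes[W,b]=[\,V\otimes(L_a\Box_H W)\otimes\Bbbk_{g(a,b)},\,ab\,].
\]
This is not cosmetic. The map $f^{a,b}$ is a \emph{bicomodule} isomorphism only from the twisted object $(L_a\Box_H L_b)^{g(a,b)}$ to $L_{ab}$; as a map $L_a\Box_H L_b\to L_{ab}$ it is an algebra and right-comodule isomorphism but \emph{not} a left $H$-comodule morphism unless $g(a,b)=e$. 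In your bracketing comparison you therefore cannot use $f^{a,b}\Box_H\operatorname{id}_U$ as a morphism in $\operatorname{Comod}(H)$, and so your proposed associator fails to live in the category. The extra tensor factor $\Bbbk_{g(a,b)}$ is precisely what absorbs this discrepancy: tensoring by a one-dimensional comodule shifts the left coaction by the corresponding group-like, and it is only after inserting these shifts that $f^{a,b}$ yields an honest $H$-comodule isomorphism between the two bracketings. The cocycle-type identity \eqref{cross-hopf-12} then governs how the factors $\Bbbk_{g(-,-)}$ and the $\phi(L_a,g(b,c))$ shift recombine, rather than (as you wrote) merely forcing two twisted structures to ``coincide''.

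The same omission propagates to rigidity: duals are not inherited purely componentwise; the paper takes $[\Bbbk,a]^*=[\Bbbk_{g(a,a^{-1})},a^{-1}]$, again reflecting the group-like twist. Once you insert $\Bbbk_{g(a,b)}$ into the tensor product, your outline for the pentagon (decoupling into \eqref{cross-hopf-14} for the $f$-part and the $3$-cocycle condition for the scalar part, with \eqref{cross-hopf-12} controlling the $g$-bookkeeping) is exactly right and is what the paper invokes via \cite[Eq.~(5.8)]{MM}.
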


\begin{proof} We give an sketch of the proof.  Let $\Upsilon$ be a collection as in the Theorem. For $V,W\in\operatorname{Comod}(H)$, $a,b\in\Gamma$, define
\begin{align*}
    [V,a]\otimes  [W,b]:&=[V\otimes  (L_a\Box_H W)\otimes  \Bbbk_{g(a,b)},ab],\\
    [V,1]^*:&=[V^*,1],\\
    [\Bbbk,a]^*:&=[\Bbbk_{g(a,a^{-1})},a^{-1}].
\end{align*} Using \cite[Eq (5.8)]{MM}, we obtain the pentagon diagram and therefore $\operatorname{Comod}(H)(\Upsilon)$ is a monoidal category. Since $\operatorname{Comod}(H)$ is finite tensor category, then $\operatorname{Comod}(H)(\Upsilon)$ is also finite tensor category.
\end{proof}{}

The following Lemma gives us a second condition to decided if our  extensions can be braided.
\begin{thm}\label{braided}
If $\operatorname{Comod}(H)(\Upsilon)$ is braided with braiding $c$ then the following conditions have to hold
\begin{enumerate}
    \item $L_a\simeq H$ for all $a\in \Gamma$, 
    \item $\Gamma$ is Abelian,
    \item $\Upsilon$ comes from a data $(g,f^{a,b},\gamma)_{a,b\in\Gamma}$ with 
\begin{itemize}
    \item $g\in Z^2(\Gamma,G(H))$ normalized,
    \item $f^{a,b}:H^{g(a,b)}\to H$ a bicomodule algebra isomorphism with $f^{ab,c}f^{a,b}=f^{a,bc}f^{b,c}$,
    \item $\gamma\in Z^3(G(H),\Bbbk^{\times})$ normalized.
\end{itemize} \end{enumerate}
\end{thm}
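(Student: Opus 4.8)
The plan is to peel off the three conclusions one at a time, working with the homogeneous objects $[V,a]$ and the explicit tensor product $[V,a]\otimes[W,b]=[V\otimes(L_a\Box_H W)\otimes\Bbbk_{g(a,b)},ab]$ and its normalization \eqref{cross-hopf-11}.

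First I would establish that $\Gamma$ is Abelian. Since $\mathcal D=\oplus_{g\in\Gamma}\mathcal D_g$ is a direct sum of Abelian categories, there are no nonzero morphisms between distinct homogeneous components. For nonzero $V,W\in\operatorname{Comod}(H)$ the object $[V,a]\otimes[W,b]$ is nonzero (the cotensor by a biGalois object and the tensor by the invertible $\Bbbk_{g(a,b)}$ preserve nonzero objects, and the tensor product of nonzero objects in a finite tensor category is nonzero) and lies in degree $ab$, while $[W,b]\otimes[V,a]$ lies in degree $ba$. The braiding isomorphism $c_{[V,a],[W,b]}$ joins these two objects, so it forces $ab=ba$; letting $a,b$ range over $\Gamma$ gives that $\Gamma$ is Abelian.

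Next, and this is the crux, I would show $L_a\simeq H$ as $(H,H)$-biGalois objects. Fix $a\in\Gamma$ and restrict the braiding to the pairs $([\Bbbk,a],[V,e])$. Using $L_e=H$, $g(a,e)=g(e,a)=e$ and $H\Box_H W\simeq W$, one computes $[\Bbbk,a]\otimes[V,e]\simeq[L_a\Box_H V,a]$ and $[V,e]\otimes[\Bbbk,a]\simeq[V,a]$, so $c_{[\Bbbk,a],[V,e]}$ supplies a natural isomorphism $\xi_V\colon L_a\Box_H V=\mathcal F_{L_a}(V)\to V$ in $\operatorname{Comod}(H)$. To promote $\xi$ to an isomorphism of \emph{tensor} functors $\mathcal F_{L_a}\simeq\operatorname{id}$, I would feed the hexagon \eqref{bra1} with $U=[\Bbbk,a]$, $V=[V_0,e]$, $W=[W_0,e]$: unwinding the associativity constraints of $\operatorname{Comod}(H)(\Upsilon)$ on such mixed triples, the contributions of the $f^{a,b}$ and of $\gamma$ reduce to the monoidal structure $J$ of $\mathcal F_{L_a}$ (coming from the algebra structure of $L_a$), so the hexagon becomes exactly the compatibility of $\xi_{V_0\otimes W_0}$ with $\xi_{V_0}$ and $\xi_{W_0}$. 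Thus $\xi$ is a tensor natural isomorphism, and by Schauenburg's classification recalled in the excerpt ($\mathcal F_A\simeq\mathcal F_B$ as tensor functors iff $A\simeq B$ as biGalois objects, with $\mathcal F_H\simeq\operatorname{id}$) we conclude $L_a\simeq H$ as biGalois objects.

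The remaining part is bookkeeping. Having fixed bicomodule-algebra isomorphisms $L_a\simeq H$, I would transport the data of $\Upsilon$ along them, replacing every $L_a$ by $H$. Since $H\Box_H\Bbbk_g\simeq\Bbbk_g$ gives $\phi(H,g)=g$, condition \eqref{cross-hopf-12} becomes $g(b,c)\,g(a,bc)=g(a,b)\,g(ab,c)$, i.e. $g$ is a normalized $2$-cocycle in $Z^2(\Gamma,G(H))$; the maps $f^{a,b}\colon(H\Box_H H)^{g(a,b)}\to H$ become bicomodule algebra isomorphisms $f^{a,b}\colon H^{g(a,b)}\to H$, and \eqref{cross-hopf-14} collapses to $f^{ab,c}f^{a,b}=f^{a,bc}f^{b,c}$; the normalized $\gamma\in Z^3(G(H),\Bbbk^\times)$ is untouched. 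This is precisely the data $(g,f^{a,b},\gamma)_{a,b\in\Gamma}$ in the statement. The main obstacle is the monoidality step inside the second paragraph: I must compute the associators of $\operatorname{Comod}(H)(\Upsilon)$ on triples with one factor in degree $a$ and two in degree $e$, keeping careful track of the grade shift and of the invertible objects $\Bbbk_{g(a,b)}$, and verify that $\xi$ respects both the left and the right $H$-comodule (hence full bicomodule algebra) structures, so that Schauenburg yields an isomorphism of biGalois objects and not merely of comodules.
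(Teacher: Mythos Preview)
Your argument is correct and matches the paper's approach: the paper likewise uses the braiding component between $[V,e]$ and $[\mathbf 1,a]$ to obtain a monoidal natural isomorphism between $\operatorname{id}_{\operatorname{Comod}(H)}$ and $L_a\Box_H-$ (then invokes Schauenburg to get $L_a\simeq H$), uses $c_{[\mathbf 1,a],[\mathbf 1,b]}$ to force $ab=ba$, and finally specializes \eqref{cross-hopf-11}--\eqref{cross-hopf-14} with $L_a=H$ and $\phi(H,g)=g$ to extract the datum $(g,f^{a,b},\gamma)$. The only cosmetic differences are that the paper treats (1) before (2) and uses $c_{[V,e],[\mathbf 1,a]}$ rather than $c_{[\Bbbk,a],[V,e]}$, and that it compresses your hexagon check into the single phrase ``monoidal since $c$ is a braiding''.
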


\begin{proof} (1)  
Take, for any $V\in\operatorname{Comod}(H)$, $c_{[V,e][\mathbf 1,a]}:[V,a]\to[L_a\Box_H V,a]$, 
this defines a natural isomorphism $\overline c_a:\operatorname{id}_{\mathcal C}\to L_a\Box_H -$ wich is monoidal since $c$ is a braiding. Then $L_a\simeq H$ as bicomodule algebras for all $a\in\Gamma$.

(2) Consider $c_{[\mathbf 1,a][\mathbf 1,b]}:[\Bbbk_{g(a,b)},ab]\to[\Bbbk_{g(b,a)},ba]$ then $ab=ba$ for all $a,b\in\Gamma$ and $\Gamma$ is Abelian.

(3) Since $L_a$ is trivial, then condition Theorem \ref{crossed}(\ref{cross-hopf-12}) is equivalent to  $g\in Z^2(\Gamma,G(H))$ and it is normalized by condition Theorem \ref{crossed}(\ref{cross-hopf-11}). Moreover $f^{a,b}:H^{g(a,b)}\to H$ is a bicomodule algebra isomorphism that satisfies $f^{ab,c}f^{a,b}=f^{a,bc}f^{b,c}$ which is equivalent to condition Theorem \ref{crossed}(\ref{cross-hopf-14}).

\end{proof}
\begin{rem} By definition of bicomodule morphism, $f^{a,b}:H\to H$ has to be an algebra isomorphism such that $f^{a,b}(h)_1\otimes  f^{a,b}(h)_2=g^{-1}h_1g\otimes  f^{a,b}(h_2)$ and $f^{a,b}(h)_1\otimes  f^{a,b}(h)_2=f^{a,b}(h_1)\otimes  h_2$, then $g^{-1}h_1g\otimes  f^{a,b}(h_2)=f^{a,b}(h_1)\otimes  h_2$.

\end{rem}

In the case when $H=\wedge V\# \Bbbk C_2$, as Example \ref{super}, using the previous Theorem we obtained eight tensor categories non-equivalent pairwise, \cite[Sect 6.3]{MM}, named $\mathcal C_0(1,\operatorname{id},\pm 1)$, $\mathcal C_0(u,\iota,\pm 1)$, $\mathcal D(1,\operatorname{id},\pm 1)$, $\mathcal D(u,\iota,\pm 1)$.

In all cases, the underlying Abelian category is $\operatorname{Comod}(H) \oplus
\operatorname{Comod}(H)$ and for $V,W,Z\in \operatorname{Comod}(H)$ they are defined in the following way:

$\bullet$ The tensor product, dual objects and associativity in  $\mathcal C_0(1,\operatorname{id},\pm 1)$ are given by
\begin{align*}
[V,e][W,g]&=[V\otimes  W,g] ,& [V,u][W,g]&=[V\otimes  \textbf{U}_0\Box_H W,ug],\\
[V,e]^*&=[V^*,e],& [\mathbf{1},u]^*&=[\Bbbk,u],\end{align*}
$\alpha_{[V,u],[W,u],[Z,u]}$ is not trivial, 
and $\textbf{U}_0$ is certain BiGalois object, see \cite[Section 4]{MM}.

$\bullet$ The tensor product, dual objects and associativity in $\mathcal C_0(u,\iota,\pm 1)$ are given by
\begin{align*}
[V,e][W,e]&=[V\otimes  W,1] ,& [V,u][W,u]&=[V\otimes  \textbf{U}_0\Box_H W\otimes\Bbbk_u,e],\\
[V,e][W,u]&=[V\otimes  W,u] ,& [V,u][W,e]&=[V\otimes  \textbf{U}_0\Box_H W,u],\\
[V,e]^*&=[V^*,e],& [\mathbf{1},u]^*&=[\Bbbk_u,u],\end{align*}
$\alpha_{[V,u],[W,u],[Z,u]}$ is not trivial.

$\bullet$ The tensor product, dual objects and associativity in $\mathcal D(1,\operatorname{id},\pm 1)$ are given by
\begin{align*}
[V,e][W,g]&=[V\otimes  W,g] ,& [V,u][W,g]&=[V\otimes  W,ug],\\
[V,e]^*&=[V^*,e],& [\mathbf{1},u]^*&=[\Bbbk,u],\end{align*}
$\alpha_{[V,u],[W,u],[Z,u]}=[\pm \operatorname{id}_{V\otimes  W\otimes  Z},u]$ and the others are trivial.

$\bullet$ The tensor product, dual objects and associativity in $\mathcal D(u,\iota,\pm 1)$ are given by
\begin{align*}
[V,e][W,e]&=[V\otimes  W,e] ,& [V,u][W,u]&=[V\otimes  W\otimes\Bbbk_u,e],\\
[V,e][W,u]&=[V\otimes  W,u] ,& [V,u][W,e]&=[V\otimes  W,u],\\
[V,e]^*&=[V^*,e],& [\mathbf{1},u]^*&=[\Bbbk_u,u],\end{align*}
$\alpha_{[V,u],[W,u],[Z,u]}=[\pm\operatorname{id}_{V\otimes  W} \otimes\tau(\varepsilon\iota\rho_Z\otimes\operatorname{id}_{Z\otimes  \Bbbk_u}),u],$ where $\iota:H^u\to H$ is the unique bicomodule algebra isomorphism which satisfies $\iota(u)=-u$ and $\iota(x)=-x$ for $x\in V$; and $\tau:X\otimes  Y\to Y\otimes  X$, $\tau(z\otimes  k)=k\otimes  z$ for all $X,Y\in \operatorname{Comod}(H)$, see \cite[Rem 2.2]{MM}.

\begin{rem} By Lemma \ref{braided}(1), we obtain that only the categories $\mathcal D(1,\operatorname{id},\pm 1)$ and $\mathcal D(u,\iota,\pm 1)$ could be braided, since the BiGalois objects have to be trivials.

By direct calculation on Equation (\ref{bra1}), $\mathcal D(u,\iota,- 1)$ is not braided with trivial braiding. So, in this case, we want to know if there exist another possible braidings. 
\end{rem}

\section{Braided crossed product}
Let $\Gamma$ be an Abelian group. In \cite{MM}, following the ideas developed in \cite{G}, we described all $\Gamma$-crossed product tensor categories which are extensions of $\operatorname{Comod}(H)$ for $H$ a Hopf algebra in terms of certain Hopf-algebraic datum. Fix $(H,r)$ a  CQT-Hopf algebra. In the first Lemma of this Section, we do the same for the braiding of crossed products that are $\Gamma$-extensions of $\operatorname{Comod}(H)$.

\begin{rem} If $v:H\to H$ is a left $H$-comodule morphism, since the coaction is the coproduct, $v$ satisfies $v(x)_1\otimes  v(x)_2=x_1\otimes  v(x_2)$, for all $x\in H$. In particular, $v$ is not a coalgebra morphism and if $g\in G(H)$, $v(g)=g\varepsilon(v(g))$. 

\end{rem}

\begin{lem} Fix a datum $(g,f^{a,b},\gamma)_{a,b\in\Gamma}$, as in Lemma \ref{braided}, and let $\mathcal C$ be the associated tensor category. Consider a pair $(v^a,w^a)_{a\in\Gamma}$ where $v^a,w^a:H\to H$ are left $H$-comodule algebra isomorphisms. 
Let $W^a=\varepsilon w^a$, $V^a=\varepsilon v^a$ and $F^{a,b}=\varepsilon f^{a,b}$.
If for all $a,b,c\in\Gamma$ and $X\in\operatorname{Comod}(H)$ we have 
\begin{equation}
    v^1 =w^1= \operatorname{id}_H,
\end{equation}
\begin{equation}\label{braid1CCOA}
(g(a,b),f^{a,b})=(g(b,a),f^{b,a}), \end{equation}
\begin{equation}\label{braid2CCOA}
W^b(x_{-3})W^a(x_{-2})(W^{ab})^{-1}(x_{-1})x_0=F^{a,b}(x_{-2})r(x_{-1}\otimes  g(a,b))x_0,\quad x\in X, \end{equation}
\begin{equation}\label{braid3CCOA}
V^b(x_{-3})V^a(x_{-2})(V^{ab})^{-1}(x_{-1})x_0=r(x_{-2}\otimes  g(a,b))F^{a,b}(x_{-1})x_0,\quad x\in X,\end{equation}
\begin{equation}\label{braid4CCOA}
V^a(g(b,c))=(\gamma_{a,b,c} \gamma_{b,c,a})^{-1}\gamma_{b,a,c}, \end{equation}
\begin{equation}\label{braid5CCOA}
W^b(g(c,a))=\gamma_{c,a,b} \gamma_{b,c,a}\gamma_{c,b,a}^{-1}; \end{equation}

then we obtain a braiding over $\mathcal C$ given by
 $$\mathbf c_{[V,a],[W,b]}=c_{V,W}((V^a\otimes\operatorname{id})\rho_V\otimes  (W^a\otimes\operatorname{id})\rho_W)\otimes\operatorname{id}, \ \ V,W\in\operatorname{Comod}(H), a,b\in\Gamma.$$
All braidings over $\mathcal C$ come from a pair $(v^a,w^a)_{a\in\Gamma}$ which satisfies (\ref{braid1CCOA}) to (\ref{braid5CCOA}).
\end{lem}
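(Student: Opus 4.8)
The plan is to characterize all braidings on the crossed product category $\mathcal C$ by analyzing how a braiding $\mathbf c$ must interact with the grading and the tensor structure described in Theorem \ref{crossed}. Since every object is a direct sum of homogeneous pieces $[V,a]$, naturality reduces the problem to specifying the components $\mathbf c_{[V,a],[W,b]}$, and because $\mathbf c_{[V,a],[W,b]}$ is a morphism $[V,a]\otimes[W,b]\to[W,b]\otimes[V,a]$, i.e.\ $[V\otimes(L_a\Box_H W)\otimes\Bbbk_{g(a,b)},ab]\to[W\otimes(L_b\Box_H V)\otimes\Bbbk_{g(b,a)},ba]$, the first step is to observe that by Lemma \ref{braided} we already know $L_a\simeq H$ and $\Gamma$ is Abelian, so source and target live in the same component $ab=ba$. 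I would then invoke Lemma \ref{Sch}: any monoidal natural automorphism of $\operatorname{id}_{\operatorname{Comod}(H)}$ has the form $(\varepsilon f\otimes\operatorname{id})\rho$, which is exactly why the candidate braiding is built from comodule algebra isomorphisms $v^a,w^a:H\to H$ twisting the $V$-- and $W$--legs via $V^a=\varepsilon v^a$, $W^a=\varepsilon w^a$, composed with the base braiding $c_{V,W}$ coming from the CQT form $r$.

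The core of the argument is to translate the two hexagon axioms \eqref{bra1} and \eqref{bra2}, together with the constraint that $\mathbf c$ be a morphism in $\mathcal C$ (hence compatible with the nontrivial associativity built from $g$, the $f^{a,b}$, and the $3$-cocycle $\gamma$), into the five displayed equations. First I would show that requiring $\mathbf c_{[\mathbf 1,a],[\mathbf 1,b]}$ and $\mathbf c_{[\mathbf 1,b],[\mathbf 1,a]}$ to be mutually inverse or symmetric forces $(g(a,b),f^{a,b})=(g(b,a),f^{b,a})$, giving \eqref{braid1CCOA}. Next, expanding the hexagon \eqref{bra1} on triples of the form $[V,a]\otimes[\mathbf 1,b]\otimes[\mathbf 1,c]$ and $[X,e]\otimes[\mathbf 1,a]\otimes[\mathbf 1,b]$ and pushing everything through the definition of the tensor product and the braiding candidate, the $H$-comodule leg identities must match: the composite of the twists $W^b,W^a,(W^{ab})^{-1}$ applied to iterated coactions must equal the contribution of $F^{a,b}$ together with the $r$-factor $r(x_{-1}\otimes g(a,b))$. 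This is exactly \eqref{braid2CCOA}, and the mirror hexagon \eqref{bra2} yields \eqref{braid3CCOA} with $V^a,V^b$ in place of $W^a,W^b$ and the $r$-factor on the other side. Finally, tracking the scalar contributions of the associativity constraint on the one-dimensional objects $\Bbbk_{g(b,c)}$ (where a comodule map acts by the scalar $\varepsilon$ of its value on a group-like) produces the $3$-cocycle relations \eqref{braid4CCOA} and \eqref{braid5CCOA}, since $V^a(g(b,c))$ and $W^b(g(c,a))$ are scalars governed by how $\gamma$ reassociates the grading.

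For the converse direction I would verify that, given a pair $(v^a,w^a)$ satisfying \eqref{braid1CCOA}--\eqref{braid5CCOA}, the formula for $\mathbf c$ indeed defines natural isomorphisms satisfying both hexagons. Naturality is automatic because each ingredient ($c_{V,W}$, the coactions $\rho_V,\rho_W$, and the algebra maps $v^a,w^a$) is natural in $V,W$; invertibility follows from the invertibility of $c$, of $r$, and of the comodule algebra isomorphisms. The two hexagons then reduce, after expanding the associators, precisely to the five equations we derived, which hold by hypothesis. The main obstacle I anticipate is the bookkeeping in the hexagon expansions: the associativity morphism in $\mathcal C$ mixes a genuine associator in $\operatorname{Comod}(H)$, the repeated cotensoring $L_a\Box_H-$ (now trivialized to $H\Box_H-\simeq\operatorname{id}$), and the group-like shifts $\Bbbk_{g(a,b)}$, so isolating which factor of a triple iterated coaction $x_{-3}\otimes x_{-2}\otimes x_{-1}\otimes x_0$ each twist acts on, and keeping the $r$-form arguments in the correct order, is delicate. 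Carefully fixing conventions for $\rho$, for the braiding $c_{V,W}(x\otimes y)=r(y_{-1}\otimes x_{-1})y_0\otimes x_0$, and for how $\varepsilon$ collapses the group-like legs will be what makes the five equations come out in exactly the stated form.
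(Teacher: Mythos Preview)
Your approach differs substantively from the paper's. The paper does \emph{not} expand the hexagon axioms directly; instead it invokes Galindo's general classification \cite[Defn~5.3, Thm~5.4]{G}, which says that a braiding on a crossed product tensor category is equivalent to a triple $(\theta^a,\tau^a,t_{a,b})_{a,b\in\Gamma}$ --- monoidal natural automorphisms $\theta^a,\tau^a$ of $\operatorname{id}_{\operatorname{Comod}(H)}$ and isomorphisms $t_{a,b}:(U_{a,b},\sigma^{a,b})\to(U_{b,a},\sigma^{b,a})$ in the center --- satisfying five abstract conditions (\ref{br1})--(\ref{br5}). The entire proof in the paper is then a dictionary: Lemma~\ref{Sch} identifies $\theta^a,\tau^a$ with $(\varepsilon v^a\otimes\operatorname{id})\rho$, $(\varepsilon w^a\otimes\operatorname{id})\rho$; one-dimensionality of $U_{a,b}=\Bbbk_{g(a,b)}$ makes $t_{a,b}$ a scalar; and each of Galindo's five conditions is rewritten as one of (\ref{braid1CCOA})--(\ref{braid5CCOA}). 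Your plan to unpack the hexagons from scratch would in effect reprove Galindo's theorem in this special case, which is doable but much longer.

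There are also two soft spots in your sketch. First, your derivation of (\ref{braid1CCOA}) from ``$\mathbf c_{[\mathbf 1,a],[\mathbf 1,b]}$ and $\mathbf c_{[\mathbf 1,b],[\mathbf 1,a]}$ mutually inverse or symmetric'' is not right: a braiding need not be symmetric. In the paper, $g(a,b)=g(b,a)$ comes from $t_{a,b}$ being an $H$-comodule map between the one-dimensional objects $\Bbbk_{g(a,b)}$ and $\Bbbk_{g(b,a)}$, and $f^{a,b}=f^{b,a}$ comes from $t_{a,b}$ being a morphism in $\mathcal Z(\operatorname{Comod}(H))$, i.e.\ intertwining the half-braidings $\sigma^{a,b}$ and $\sigma^{b,a}$. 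Second, you never introduce the scalar degrees of freedom $t_{a,b}\in\Bbbk^\times$; in the paper these are present a priori, and one shows $t_{a,b}=1$ by specializing condition (\ref{br4}) to $c=1$. Without isolating and then eliminating these scalars, your direct hexagon computation would carry unexplained extra parameters into (\ref{braid4CCOA}) and (\ref{braid5CCOA}).
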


\begin{proof} By \cite[Defn 5.3]{G}, a datum $(g,f^{a,b},\gamma)_{a,b\in\Gamma}$ has associated a braiding if there exist a triple $(\theta^a, \tau^a, t_{a,b})_{a,b\in G}$ where
\begin{itemize}
  \item $\theta^a, \tau^a:\operatorname{id}_{\mathcal C}\to \operatorname{id}_{\mathcal C}$ are  monoidal natural isomorphisms,
  \item for all $a,b \in G$,  $t_{a,b}:(U_{a,b},\sigma^{a,b})\to(U_{b,a},\sigma^{b,a}) $ are isomorphisms in $\mathcal{Z}(\mathcal C)$, where $\sigma^{a,b}_X=\tau(\varepsilon f^{a,b}\otimes\operatorname{id}_X)\rho_X$, for $(X,\rho_X)\in \operatorname{Comod}(H)$, and $U_{a,b}=\Bbbk_{g(a,b)}$,
\end{itemize}
such that for all $a,b,c\in\Gamma$ and $X\in\mathcal C$, the following conditions hold
\begin{equation}\label{br1}\theta^1 =\tau^1= \operatorname{id},\quad \theta_\mathbf 1^a =\operatorname{id}_\mathbf 1=\tau_\mathbf 1^a,\quad t_{a,1}=t_{1,a}=\operatorname{id}_\mathbf 1,\end{equation} 
\begin{equation}\label{br2}c_{U_{a,b},X}\sigma^{a,b}_X=((\tau^{ab}_X)^{-1}\tau^{a}_X\tau^b_X)\otimes\operatorname{id}_{U_{a,b}},\end{equation}
\begin{equation}\label{br3}\sigma^{a,b}_Xc_{U_{a,b},X}
=\operatorname{id}_{U_{a,b}}\otimes((\theta^{ab}_X)^{-1}\theta^a_X\theta^b_X),\end{equation}
\begin{equation}\label{br4}\gamma_{a,b,c}(\theta^a_{U_{b,c}}\otimes  t_{bc,a})\gamma_{b,c,a}=(t_{b,c}\otimes\operatorname{id}_{U_{ba,c}})\gamma_{b,a,c}(t_{c,a}\otimes\operatorname{id}_{U_{b,ac}}),\end{equation}
\begin{equation}\label{br5}\gamma^{-1}_{c,a,b}(\tau^b_{c,a}\otimes  t_{b,ca})\gamma^{-1}_{b,c,a}=(t_{b,a}\otimes\operatorname{id}_{U_{c,ba}})\gamma^{-1}_{c,b,a}(t_{b,c}\otimes\operatorname{id}_{U_{bc,a}}).\end{equation}

By Lemma \ref{Sch}, each monoidal natural isomorphism of the identity functor comes from a left $H$-comodule algebra isomorphism, then $\theta^a_X:=(\varepsilon v^a \otimes\operatorname{id})\rho_X$ and $\tau^a_X:=(\varepsilon w^a\otimes  \operatorname{id}_X)\rho_X$ for all $X\in\operatorname{Comod}(H)$. Since $U_{g(a,b)}=\Bbbk_{g(a,b)}$, we can take $t_{a,b}\in\Bbbk^*$.

Each $t_{a,b}$ is a left $H$-comodule isomorphism if and only if $g(a,b)\otimes  t_{a,b}\operatorname{id}_{\Bbbk}=g(b,a)\otimes  t_{a,b}\operatorname{id}_\Bbbk$  wich gives $g(a,b)=g(b,a)$ for all $a,b\in\Gamma$. Moreover, each $t_{a,b}$ is a braided morphism if and only if $\sigma^{a,b}_Xt_{a,b}=\sigma^{b,a}_X t_{a,b}$ for $a,b\in\Gamma$ and $X\in\operatorname{Comod}(H)$ if and only if   $\sigma^{a,b}=\sigma^{b,a}$. Then $t_{a,b}$ is an isomorphism in $\mathcal Z(\operatorname{Comod}(H))$ if and only if Condition (\ref{braid1CCOA}) holds.

Condition (\ref{br1}) is equivalent to  $v^1 =w^1= \operatorname{id}_H$ and $t_{a,1}=t_{1,a}=1$, since $\theta_\Bbbk^a =\operatorname{id}_\Bbbk=\tau_\Bbbk^a$ is always true.
Condition (\ref{br2}) is equivalent to $$F^{a,b}(x_{-2})r(x_{-1}\otimes  g(a,b))x_0\otimes  k=W^b(x_{-3})W^a(x_{-2})(W^{ab})^{-1}(x_{-1})x_0\otimes  k,$$ 
for $x\otimes  k\in X\otimes\Bbbk_{g(a,b)}$, which is equivalent to Condition (\ref{braid2CCOA}). In the same way, Condition (\ref{br3}) is equivalent to Condition (\ref{braid3CCOA}).
Condition (\ref{br4}) is equivalent to $\gamma_{a,b,c}V^a(g(b,c)) t_{bc,a}\gamma_{b,c,a}=t_{b,c}\gamma_{b,a,c}t_{c,a}$ but if we take $c=1$ then $$1=t_{b,a}, \text{ for }a,b\in\Gamma,$$ so, this Condition is equivalent to Condition (\ref{braid4CCOA}), and Condition (\ref{br5}) is equivalent to Condition (\ref{braid5CCOA}).

By \cite[Thm 5.4]{G}, this pair produces a braiding over $\mathcal C$ given by
\begin{align}\label{trenza}\mathbf c_{[V,a],[W,b]}=[c_{V,W}(\theta^a_V\otimes\tau^a_W ),ab], \ \ \text{ for all }V,W\in\operatorname{Comod}(H), a,b\in\Gamma,\end{align} and all braidings come from such a pair.
\end{proof}

Now, we focus our attention into the case $\Gamma=C_2$. By Lemma \ref{braided}, a datum $\Upsilon'=(g,f,\gamma)$ with $g\in G(H)$ a group-like element, $f:H^g\to H$ a bicomodule algebra isomorphism and $\gamma\in\Bbbk^{\times}$, $\gamma^2=1;$ generates a tensor category $\mathcal C=\operatorname{Comod}(H)(\Upsilon')$.

\medbreak

The following Theorem gives us the third and last condition to decide if our categories are braidable.
\begin{thm}
The category $\operatorname{Comod}(H)(\Upsilon')$ is a braided $C_2$-extension if and only if, there exists
a pair of isomorphisms of left $H$-comodule algebras $v,w:H\to H$ such that for all $X\in\operatorname{Comod}{H}$ and $x\in X$ 
\begin{enumerate}
    \item[a.] $\varepsilon(w(x_{-2})w^{-1}(x_{-1}))x_0=x$,
    \item[b.] $\varepsilon(w(x_{-2})w(x_{-1}))x_0=\varepsilon f(x_{-2})r(x_{-1}\otimes  g)x_0$,
    \item[c.] $\varepsilon(v(x_{-2})v^{-1}(x_{-1}))x_0=x$,
    \item[d.] $\varepsilon(v(x_{-2})v(x_{-1}))x_0=r(x_{-2}\otimes  g)\varepsilon f(x_{-1})x_0$,
    \item[e.] $\varepsilon(v(g))=\gamma^{-1}$,
    \item[f.] $\varepsilon(w(g))=\gamma$,
\end{enumerate}
\end{thm}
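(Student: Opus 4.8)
The plan is to specialize the preceding general Lemma, which classifies all braidings on a $\Gamma$-crossed product in terms of pairs $(v^a,w^a)_{a\in\Gamma}$ satisfying conditions (\ref{braid1CCOA})--(\ref{braid5CCOA}), to the case $\Gamma=C_2=\{1,u\}$ and a datum $\Upsilon'=(g,f,\gamma)$ with a single group-like $g$, a single bicomodule algebra isomorphism $f\colon H^g\to H$, and a scalar $\gamma\in\Bbbk^\times$ with $\gamma^2=1$. Because $v^1=w^1=\operatorname{id}_H$ is forced by the first normalization condition, the only genuine data are the maps at the nontrivial element $u$; so I would set $v:=v^u$ and $w:=w^u$ and rewrite each of the five conditions of the Lemma under this identification, using $W^u=\varepsilon w$, $V^u=\varepsilon v$, and $F^{u,u}=\varepsilon f$.

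First I would dispose of condition (\ref{braid1CCOA}). Since $\Gamma=C_2$ has the unique nontrivial commutation $u\cdot u = 1 = u\cdot u$, and the datum has only one $f$ and one $g$, the requirement $(g(a,b),f^{a,b})=(g(b,a),f^{b,a})$ is automatic and carries no content here; this is why it does not appear among (a)--(f). Next I would expand (\ref{braid2CCOA}) and (\ref{braid3CCOA}) at $a=b=u$ (the only nontrivial choice, since the $a$ or $b$ equal to $1$ cases reduce to identities by the normalization $v^1=w^1=\operatorname{id}$). For (\ref{braid2CCOA}), the left side becomes $W^u(x_{-3})W^u(x_{-2})(W^{1})^{-1}(x_{-1})x_0$, and since $W^{uu}=W^1=\varepsilon\operatorname{id}_H=\varepsilon$, the factor $(W^1)^{-1}$ contributes a counit; after reindexing this yields exactly condition (b), $\varepsilon(w(x_{-2})w(x_{-1}))x_0=\varepsilon f(x_{-2})r(x_{-1}\otimes g)x_0$. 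An entirely parallel computation turns (\ref{braid3CCOA}) into (d). The two ``inverse'' identities (a) and (c) do not come directly from (\ref{braid2CCOA})--(\ref{braid3CCOA}); rather they encode the statement that $w$ and $v$ are honest $H$-comodule algebra \emph{isomorphisms}, i.e.\ that $\varepsilon(w(x_{-2})w^{-1}(x_{-1}))x_0=x$ expresses $w\,{*}\,w^{-1}=\varepsilon$ in convolution applied to the coaction. I would make this invertibility bookkeeping explicit and present (a),(c) as the well-definedness of the inverse that appears in the Lemma's conditions.

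Finally I would handle (e) and (f) by specializing (\ref{braid4CCOA}) and (\ref{braid5CCOA}). With $\Gamma=C_2$ the only nontrivial three-variable instance is $a=b=c=u$, and every $3$-cocycle value $\gamma_{x,y,z}$ on $C_2$ is determined by the single scalar $\gamma=\gamma_{u,u,u}$ (all others being $1$ by normalization). Plugging $g(u,u)=g$ into (\ref{braid4CCOA}) gives $V^u(g)=(\gamma_{u,u,u}\gamma_{u,u,u})^{-1}\gamma_{u,u,u}=\gamma^{-1}$, i.e.\ condition (e); and (\ref{braid5CCOA}) gives $W^u(g)=\gamma_{u,u,u}\gamma_{u,u,u}\gamma_{u,u,u}^{-1}=\gamma$, i.e.\ condition (f). Since $v(g)=g\,\varepsilon(v(g))$ and $w(g)=g\,\varepsilon(w(g))$ (a group-like is scaled by a comodule map, per the Remark preceding the Lemma), these are precisely $\varepsilon(v(g))=\gamma^{-1}$ and $\varepsilon(w(g))=\gamma$. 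The main obstacle is not any single step but the careful reindexing in translating (\ref{braid2CCOA})--(\ref{braid3CCOA}) into (b) and (d): one must track the iterated coaction $x\mapsto x_{-3}\otimes x_{-2}\otimes x_{-1}\otimes x_0$ correctly and confirm that the collapse $W^{uu}=\varepsilon$ is legitimate, which relies on $u^2=1$ in $C_2$ together with $w^1=\operatorname{id}_H$. For the converse direction I would simply observe that the biconditional statement of the Lemma already guarantees that any pair satisfying the five conditions produces a braiding via (\ref{trenza}), so verifying (a)--(f) is equivalent to verifying (\ref{braid1CCOA})--(\ref{braid5CCOA}) in this case, establishing the ``if and only if.''
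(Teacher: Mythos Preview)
Your overall strategy---specialize the preceding Lemma to $\Gamma=C_2$, set $v:=v^u$, $w:=w^u$, and read off each condition---matches the paper's approach, and your treatment of (\ref{braid1CCOA}), (\ref{braid4CCOA}), (\ref{braid5CCOA}) is correct. The gap is in how you obtain conditions (a) and (c).

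You assert that the instances of (\ref{braid2CCOA}) and (\ref{braid3CCOA}) with $a=1$ or $b=1$ ``reduce to identities by the normalization $v^1=w^1=\operatorname{id}$,'' and then claim that (a) and (c) instead record ``invertibility bookkeeping'' for $w$ and $v$. Neither part is right. Take $(a,b)=(1,u)$ in (\ref{braid2CCOA}): the left side becomes $W^u(x_{-3})\,\varepsilon(x_{-2})\,(W^{u})^{-1}(x_{-1})\,x_0=\varepsilon\!\big(w(x_{-2})\,w^{-1}(x_{-1})\big)x_0$, while the right side is $F^{1,u}(x_{-2})\,r(x_{-1}\otimes 1)\,x_0=x$ (using $f^{1,u}=\operatorname{id}$ and the CQT identity $r(-\otimes 1)=\varepsilon$). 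This is exactly condition (a), not a tautology; the same analysis at $(a,b)=(u,1)$ reproduces it, and the pair $(u,u)$ gives (b). The paper's proof records precisely this: (\ref{braid2CCOA}) is equivalent to the automatic identity $r(x_{-1}\otimes 1)x_0=x$ together with items (a) and (b), and analogously (\ref{braid3CCOA}) yields (c) and (d). Your alternative explanation---that (a) merely expresses $w*w^{-1}=\varepsilon$ and is therefore automatic once $w$ is an isomorphism---does not hold: compositional invertibility of $w$ does not by itself force $\varepsilon w$ and $\varepsilon w^{-1}$ to be convolution inverses, so (a) is a genuine condition and must be extracted from the mixed-index cases of (\ref{braid2CCOA}), as the paper does.
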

\begin{proof}

Condition (\ref{braid1CCOA}) is always true. Condition (\ref{braid2CCOA}) is equivalent to $r(x_{-1}\otimes  1)x_0=x$, and items a,b. Condition (\ref{braid3CCOA}) is equivalent to  $r(x_{-1}\otimes  1)x_0=x$, and items c,d. Condition (\ref{braid4CCOA}) is equivalent to item e. Condition (\ref{braid5CCOA}) is equivalent to item f.

Regarding condition $r(x_{-1}\otimes  1)x_0=x$, it is always true over a CoQuasi-triangular Hopf algebra.

\end{proof}

If $H=\wedge V\# \Bbbk C_2$, as Example \ref{super}, by \cite[Prop 4.10]{MM}, the isomorphisms $v$ and $w$ are identities. Then if the extension is braided the only possible braiding is the trivial, see Equation (\ref{trenza}), since the category $\operatorname{Comod}{H}$ has a braiding giving by the r-form. With this information, Conditions a-f are equivalent to 
\begin{enumerate}
    \item[a'.]  $\varepsilon(x_{-2}x_{-1})x_0=x$,
    \item[b'.]$\varepsilon(x_{-2}x_{-1})x_0=\varepsilon f(x_{-2})r(x_{-1}\otimes  g)x_0$,
    \item[c'.] $\varepsilon(x_{-2}x_{-1})x_0=r(x_{-2}\otimes  g)\varepsilon f(x_{-1})x_0$,
    \item[d'.] $\varepsilon(g)=\gamma^{-1}$,
    \item[e'.] $\varepsilon(g)=\gamma$.
\end{enumerate}
Since $g$ is a group-like element, d' and e' imply that $\gamma=1$. Thus, the only categories that could be braided are $\mathcal D(1,\operatorname{id},1)$ and $\mathcal D(u,\iota, 1)$.

\begin{cor} A $C_2$-extension over $\operatorname{Comod}{(\wedge V\# \Bbbk C_2)}$ is braided if and only if, for all comodule $X$, 
$r(f(x_{-1})\otimes  g)x_0=x, \text{ for all } x\in X.$
 \end{cor}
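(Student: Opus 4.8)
The plan is to specialize the preceding Theorem to the concrete Hopf algebra $H=\wedge V\#\Bbbk C_2$ and simplify conditions a--f using the facts established just above: namely that for this $H$ the left $H$-comodule algebra isomorphisms $v,w$ are forced to be the identity (by \cite[Prop 4.10]{MM}), and that $\gamma=1$ (forced by d' and e' together with $g$ being group-like). Once $v=w=\operatorname{id}_H$ and $\gamma=1$, the six conditions a--f collapse: conditions a', c' are the universally valid CoQuasi-triangular identity $\varepsilon(x_{-2}x_{-1})x_0=x$ (equivalently $r(x_{-1}\otimes 1)x_0=x$), and d', e' are automatic. So the entire content reduces to b' and c', which after the substitution $\gamma=1$ read
\begin{align*}
\varepsilon(x_{-2}x_{-1})x_0&=\varepsilon f(x_{-2})\,r(x_{-1}\otimes g)\,x_0,\\
\varepsilon(x_{-2}x_{-1})x_0&=r(x_{-2}\otimes g)\,\varepsilon f(x_{-1})\,x_0.
\end{align*}
My goal is then to show these two equations are each equivalent to the single displayed identity $r(f(x_{-1})\otimes g)x_0=x$.

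First I would exploit the left $H$-comodule morphism property of $f$. Since $f:H^g\to H$ is a bicomodule algebra isomorphism, the Remark after Theorem \ref{braided} gives $g^{-1}h_1 g\otimes f(h_2)=f(h_1)\otimes h_2$; applying $\varepsilon$ to the first leg yields $\varepsilon f(h_2)\,g^{-1}h_1g=\varepsilon f(h_1)\,h_2$ after using $\varepsilon(g^{-1}h_1g)=\varepsilon(h_1)$, so that $F:=\varepsilon f$ behaves compatibly with the comultiplication. Using the coassociativity and the multiplicativity of $F$ (as $f$ is an algebra map, $\varepsilon f$ is a character on the relevant component), I would rewrite the left-hand side $\varepsilon(x_{-2}x_{-1})x_0$ and the factor $F(x_{-2})$ so that both sides of b' become a single application of $F$ composed with $r(-\otimes g)$ against the coaction. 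The idea is that $\varepsilon(x_{-2}x_{-1})=\varepsilon(x_{-1})$ trivially, but the point is to reassemble the two $r$- and $F$-factors on the right into $r(f(x_{-1})\otimes g)$ using the defining CoQuasi-triangular relations for $r$ (the bicharacter property and $r(f(x_{-1})\otimes g)=\varepsilon f(x_{-2})r(x_{-1}\otimes g)$ once one recognizes $r(f(-)\otimes g)$ as a convolution product).

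Concretely, I expect the equivalence to hinge on the identity $r(f(x_{-1})\otimes g)\,x_0 = F(x_{-2})\,r(x_{-1}\otimes g)\,x_0$, which follows because $r(-\otimes g):H\to\Bbbk$ is linear and $f$ is a comodule map, so evaluating $r(\,\cdot\otimes g)$ on $f(x_{-1})$ splits the coaction exactly into the $F$ times $r$ pattern appearing in b'. Substituting this into b' turns it into $\varepsilon(x_{-2}x_{-1})x_0=r(f(x_{-1})\otimes g)x_0$; since the left side equals $x$ by the CoQuasi-triangular identity a', this is precisely the Corollary's equation $r(f(x_{-1})\otimes g)x_0=x$. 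The same manipulation, reading the right-hand factors in the opposite order, shows c' reduces to the identical statement, so b' and c' are not independent but both equivalent to the single displayed condition. Therefore the braiding exists exactly when $r(f(x_{-1})\otimes g)x_0=x$ holds for all $X$ and all $x\in X$.

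The main obstacle I anticipate is the bookkeeping of the iterated coaction legs (matching the $x_{-2},x_{-1},x_0$ indices correctly through coassociativity) and verifying that $r(f(-)\otimes g)$ really does factor as $F$ convolved with $r(-\otimes g)$; this requires carefully combining the comodule-morphism relation for $f$ with the bicharacter axioms $r(c\otimes ab)=r(c_1\otimes b)r(c_2\otimes a)$ of the $r$-form. Once that factorization is confirmed, the reduction of both b' and c' to the stated equation is immediate, and the forward and backward implications of the ``if and only if'' are symmetric.
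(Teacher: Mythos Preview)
Your plan is essentially the paper's own argument: reduce the six conditions to b$'$ and c$'$, then collapse these to the single identity $r(f(x_{-1})\otimes g)x_0=x$ using the bicomodule property of $f$ and the $r$-form axioms.

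One point of care. The factorization you write, $r(f(x_{-1})\otimes g)\,x_0 = F(x_{-2})\,r(x_{-1}\otimes g)\,x_0$, does not follow directly from ``$f$ is a comodule map''. Using the left comodule identity $\Delta(f(h))=g^{-1}h_1g\otimes f(h_2)$ (and $r(g^{-1}yg\otimes g)=r(y\otimes g)$ from the bicharacter axioms) one obtains instead
\[
r(f(x_{-1})\otimes g)\,x_0 \;=\; r(x_{-2}\otimes g)\,F(x_{-1})\,x_0,
\]
which is the right-hand side of c$'$, not b$'$. What bridges the two orderings is the third CQT axiom $r(a_1\otimes b_1)a_2b_2=b_1a_1\,r(a_2\otimes b_2)$: applying it with $b=g$ and then $\varepsilon f\otimes\operatorname{id}$ gives $r(x_{-2}\otimes g)\,F(x_{-1})\,x_0=F(x_{-2})\,r(x_{-1}\otimes g)\,x_0$, i.e.\ b$'\Leftrightarrow$ c$'$. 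This is exactly the step the paper isolates first; your sketch reaches the same conclusion but attributes the index swap to the comodule property rather than to the commutation axiom. Also, condition a$'$ (namely $\varepsilon(x_{-2}x_{-1})x_0=x$) is just the counit identity, not a CQT identity; the CQT input enters only through the commutation step above.
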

 
\begin{proof}  Condition a' is always true over comodules.
Since $x_1y_1r(x_2\otimes  y_2)=r(x_1\otimes  y_1)y_2x_2$ for $x,y\in H$ we have
\begin{align*}
    (x_{-1}g)r(x_{-2}\otimes  g)\otimes  x_0=r(x_{-1}\otimes  g)gx_{-2}\otimes  x_0.
\end{align*}
Applying $\varepsilon f\otimes\operatorname{id}_X$, we obtain $r(x_{-2}\otimes  g)\varepsilon f(x_{-1})x_0=\varepsilon f(x_{-2})r(x_{-1}\otimes  g)x_0$. This implies that Conditions b' and c' are equivalent.
Since $$r(f(x)\otimes  g)=r(f(x)_1\otimes  g)\varepsilon(g(f(x)_2))=r(x_1\otimes  g)\varepsilon(f(x_2))$$
we have $r(f(x_{-1})\otimes  g)x_0=\varepsilon f(x_{-2})r(x_{-1}\otimes  g)x_0$, then Condition b' is equivalent to $$r(f(x_{-1})\otimes  g)x_0=x.$$
\end{proof}
We are ready for our main result.
\begin{thm}
The categories $\mathcal D(1,\operatorname{id},1)$ and $\mathcal D(u,\iota, 1)$ are braided tensor categories. The remaining 6 categories found in \cite{MM}  are non-braidable.
\end{thm}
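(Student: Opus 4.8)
The final theorem asserts two things: a positive claim that $\mathcal D(1,\operatorname{id},1)$ and $\mathcal D(u,\iota,1)$ admit braidings, and a negative claim that the other six categories cannot be braided at all. My plan is to dispatch these two halves separately, leaning entirely on the machinery already assembled in the excerpt, so that almost no fresh computation is needed.

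\textbf{The negative half.} Six categories must be ruled out, and I would organize the elimination in two passes. First, by the Remark following Lemma \ref{braided}, any category whose BiGalois objects are nontrivial cannot be braided; this immediately discards the four $\mathcal C_0$-categories, $\mathcal C_0(1,\operatorname{id},\pm1)$ and $\mathcal C_0(u,\iota,\pm1)$, since there $\mathbf U_0$ is a nontrivial BiGalois object while Lemma \ref{braided}(1) forces $L_a\simeq H$. Second, among the remaining $\mathcal D$-categories I would invoke the conclusion drawn just before the Corollary: conditions d' and e' together force $\gamma=1$ because $g$ is group-like (so $\varepsilon(g)=1=\gamma=\gamma^{-1}$). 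Hence $\mathcal D(1,\operatorname{id},-1)$ and $\mathcal D(u,\iota,-1)$, which carry $\gamma=-1$, cannot be braided. These two passes account for exactly the six non-braidable categories.

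\textbf{The positive half.} For $\mathcal D(1,\operatorname{id},1)$ and $\mathcal D(u,\iota,1)$ I would verify braidability through the Corollary, whose criterion is the single equation $r(f(x_{-1})\otimes g)\,x_0 = x$ for all $x$ in every comodule $X$. For $\mathcal D(1,\operatorname{id},1)$ the datum has $g=1$ and $f=\operatorname{id}$, so the left side collapses to $r(x_{-1}\otimes 1)\,x_0$, which equals $x$ by the standard normalization of the $r$-form recorded at the end of the last proof ("$r(x_{-1}\otimes 1)x_0=x$ is always true over a CoQuasi-triangular Hopf algebra"). For $\mathcal D(u,\iota,1)$ the datum has $g=u$ and $f=\iota$, and here I would compute $r(\iota(x_{-1})\otimes u)\,x_0$ explicitly on the generators of $H=\wedge V\#\Bbbk C_2$ using $r=R^*$ from Example \ref{super} and the defining action $\iota(u)=-u$, $\iota(x)=-x$ on $x\in V$, checking the identity holds on the homogeneous comodule components.

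\textbf{Where the work concentrates.} The only genuinely computational point is the $\mathcal D(u,\iota,1)$ case of the Corollary's criterion: one must confirm that the two sign twists — the $-1$ built into $\iota$ and the sign $r$ produces when pairing against the grouplike $u$ on the odd part $V$ — cancel to yield the identity rather than a nontrivial scalar. I expect the even (grouplike) part to be routine and the odd part to be the crux, since that is exactly where $\iota$ and the $C_2$-action $u\cdot v=-v$ interact. Once that cancellation is verified, the Corollary supplies $v=w=\operatorname{id}$ (valid here by \cite[Prop 4.10]{MM}) and the braiding is the trivial one inherited from the $r$-form on $\operatorname{Comod}(H)$, completing the theorem.
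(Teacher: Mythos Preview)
Your negative half matches the paper exactly: the four $\mathcal C_0$-categories are eliminated by Lemma \ref{braided}(1), and $\mathcal D(1,\operatorname{id},-1)$, $\mathcal D(u,\iota,-1)$ by the $\gamma=1$ conclusion from conditions d$'$ and e$'$.

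For the positive half you take a genuinely different route from the paper on $\mathcal D(u,\iota,1)$. The paper does \emph{not} verify the Corollary's criterion $r(\iota(x_{-1})\otimes u)x_0=x$ directly; instead it checks the hexagon axioms (\ref{bra1}) and (\ref{bra2}) by hand, observing that the only nontrivial associator appears in the all-$u$ case and that the interchange identity $(f\otimes\operatorname{id})(\operatorname{id}\otimes g)=(\operatorname{id}\otimes g)(f\otimes\operatorname{id})$ suffices. The identity $r(\iota(x_{-1})\otimes u)x_0=x$ is then recorded \emph{after} the theorem as a consequence, not used as an input. Your plan reverses the logic: compute the identity first from the explicit $R$-matrix of Example \ref{super} and the formula for $\iota$, then invoke the if-and-only-if Corollary. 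Both directions are valid since the Corollary is biconditional. The paper's route is more structural and avoids touching the explicit $r$-form; yours is computational but has the side benefit of proving the final Corollary directly rather than by deduction.

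One small caution on your computation: the map $h\mapsto r(\iota(h_1)\otimes u)h_2$ is $\Bbbk$-linear but not multiplicative, so ``checking on generators'' of $H$ as an algebra is not enough---you must check on a linear basis (eight elements here). This is routine, but worth stating precisely.
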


\begin{proof}
By \cite[Thm 5.4]{G}, the only possible option for $v$ and $w$ is for there to be the identity. 
Then the categories $\mathcal D(1,\operatorname{id},1)$ and $\mathcal D(u,\iota, 1)$
have associated at most a single pair $(\operatorname{id},\operatorname{id})$, which would give it a braided structure For the remaining six categories, we already know that they are non-braidable. 

Since $\mathcal D(1,\operatorname{id},1)$ has trivial associativity and $\operatorname{Comod}(H)$ is braided then the braiding for $\mathcal D(1,\operatorname{id},1)$ is  \begin{align}\label{aa}\mathbf c_{[V,a],[W,b]}=[c_{V,W},ab], \ \ \text{ for all }V,W\in\operatorname{Comod}(H), a,b \in C_2.\end{align} 

Over $\mathcal D(u,\iota, 1)$ it is enough to check Equations (\ref{bra1}) and (\ref{bra2}) where the associativity is not trivial. Since $(f\otimes  \operatorname{id})(\operatorname{id}\otimes  g)=(\operatorname{id}\otimes  g)(f\otimes  \operatorname{id})$ for any $f,g$ morphisms in the category, also the braiding given in \eqref{aa} also satisfies the desired Equations. 
\end{proof}

\begin{cor} For $X\in \operatorname{Comod}{(\wedge V\# \Bbbk C_2)}$,  $r(\iota(x_{-1})\otimes  g)x_0=x$, for all $ x\in X$.
 \end{cor}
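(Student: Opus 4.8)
The plan is to specialize the preceding Corollary to the specific category $\mathcal D(u,\iota,1)$, where the bicomodule algebra isomorphism $f$ is exactly the map $\iota:H^u\to H$ and the group-like element is $g=u$. The general Corollary states that a $C_2$-extension over $\operatorname{Comod}(\wedge V\#\Bbbk C_2)$ is braided if and only if $r(f(x_{-1})\otimes g)x_0=x$ holds for every comodule $X$ and every $x\in X$. So my first step is simply to observe that $\mathcal D(u,\iota,1)$ is precisely the extension coming from the datum $(g,f,\gamma)=(u,\iota,1)$, as recorded in the description of the eight categories; substituting $f=\iota$ and $g=u$ into the Corollary's criterion yields the desired identity $r(\iota(x_{-1})\otimes u)x_0=x$.

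The logical direction I would use is the following: the main Theorem has already established that $\mathcal D(u,\iota,1)$ \emph{is} a braided tensor category. Since being braided is equivalent (by the Corollary) to the validity of $r(f(x_{-1})\otimes g)x_0=x$, and here $f=\iota$, $g=u$, the braidedness of $\mathcal D(u,\iota,1)$ forces the identity to hold. Thus the Corollary is an immediate consequence of combining the main Theorem with the previous Corollary — no new computation is genuinely required, only the identification of the data.

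Alternatively, if a direct verification is preferred rather than quoting the Theorem, I would unwind the left-hand side using the explicit definition of $\iota$ from the description of $\mathcal D(u,\iota,1)$, namely $\iota(u)=-u$ and $\iota(x)=-x$ for $x\in V$, together with the $r$-form $r=R^*$ coming from $R=\frac12(1\otimes 1+1\otimes u+u\otimes 1-u\otimes u)$ of Example \ref{super}. One evaluates $r(\iota(x_{-1})\otimes u)$ on the generators of $H$ and checks that the sign introduced by $\iota$ exactly cancels the sign that $u$ produces through the $r$-form when acting on the odd part $V$, while on the even (group-like) part the identity reduces to $r(x_{-1}\otimes u)$ combined with the counit, recovering $x_0$ faithfully.

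The only real subtlety — and the step I would treat most carefully — is confirming that the datum attached to $\mathcal D(u,\iota,1)$ really is $(g,f,\gamma)=(u,\iota,1)$ in the normalization used by the Corollary, so that the substitution is legitimate; this is a bookkeeping check against the classification in \cite{MM}. Everything else is a direct application of the previous Corollary, so I expect the proof to be a single short line: since $\mathcal D(u,\iota,1)$ is braided by the main Theorem, the criterion of the preceding Corollary applies with $f=\iota$ and $g=u$, giving $r(\iota(x_{-1})\otimes g)x_0=x$ for all $x\in X$.
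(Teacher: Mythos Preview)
Your proposal is correct and matches the paper's intent exactly: the Corollary is stated in the paper without proof because it is an immediate consequence of combining the main Theorem (which shows $\mathcal D(u,\iota,1)$ is braided) with the preceding Corollary's criterion $r(f(x_{-1})\otimes g)x_0=x$, specialized to the datum $(g,f,\gamma)=(u,\iota,1)$. Your identification of the data and the logical direction are precisely what the paper relies on.
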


\begin{rem}
Since $\operatorname{Comod}(H)$ is not symmetric, then these two categories are not symmetric either. 
\end{rem}


\begin{thebibliography}{99}

\bibitem{AG} N. Andruskiewitsch and M. Grana, {\em Braided Hopf Algebras over Non Abelian Finite Groups}, Conference Proceedings Acad. Nac. Ciencias (1997).

\bibitem{EG} P. Etingof and S. Gelaki, {\em The classification of finite-dimensional triangular Hopf algebras over an algebraically closed field of characteristic 0},
Moscow Math J., 3(1) (2003), 37--43.


\bibitem{G}  C. Galindo, {\em Crossed product tensor categories}, J.  Algebra, 337  (2011), 233--252.

\bibitem{HH} T. Hagge and S.-M. Hong, {\em Some non-braided fusion categories of rank 3}, Commun. Contemp. Math., 11(4) (2009), 615--637. 


\bibitem{JS} A. Joyal and R. Street, {\em Braided monoidal categories}, Macquarie Mathematics Reports (1986).


\bibitem{Ma1} A. Masuoka, {\em Cocycle deformations and Galois objects for some cosemisimple Hopf algebras of finite dimension}, in “New trends in Hopf algebra theory (La Falda, 1999)”, Contemp. Math., 267 (2000), 195-–214. 

\bibitem{Ma2} A. Masuoka, {\em Example of almost commutative Hopf algebras which are not coquasitriangular Hopf algebras}, Lecture Notes in Pure and Appl. Math., 237 (2004), 185-191.


\bibitem{MM} A. Mejia and M. Mombelli, {\em Crossed extensions of the corepresentation category of finite supergroup algebras}. Int. J. Math. DOI :  10.1142/S0129167X15500676.

\bibitem{M} S. Montgomery, {\em Hopf algebras and their actions on rings}, CBMS Regional Conf. Ser. in Math., American Mathematical Society, 82 (1993), 194--195.

\bibitem{S} P. Schauenburg, {\em Hopf Bigalois extensions}, Comm. in Algebra, 24 (1996), 3797--3825.

\end{thebibliography}
\end{document}